\newtheorem{thm}{Theorem}
\newtheorem{prop}{Proposition}
\newtheorem{lem}{Lemma}
\newcommand{\LL}{\hat{L}}
\newcommand{\UU}{\hat{U}}
\newcommand{\ZZ}{\bar{Z}}
\begin{document}
\begin{center}

{\Huge \textbf{The funds market bank problem}}

%\author{Cristina Canepa and Traian A. Pirvu}
% Use \authorrunning{Short Title} for an abbreviated version of
% your contribution title if the original one is too long
\mbox{}\\[0pt]
\vspace{0.2cm} Elena Cristina Canepa\\[0pt]  
Department of Mathematical Methods and Models\\[0pt]  
University Politehnica of Bucharest\\[0pt]  
Bucharest, Romania\\[0pt] 
cristinacanepa@yahoo.com
\vspace{1cm}

\vspace{0.2cm} Traian A.~Pirvu\\[0pt]  
Department of Mathematics \& Statistics\\[0pt]
McMaster University \\[0pt]
1280 Main Street West \\[0pt]
Hamilton, ON, L8S 4K1\\[0pt]
tpirvu@math.mcmaster.ca \vspace{1cm}
\mbox{}\\[0pt]

\end{center}

%\maketitle

\begin{abstract}
This paper considers the problem faced by a bank which trades in the funds market
so as to maintain the reserve requirements and minimize the costs of doing that.
We work in a stochastic paradigm and the reserve requirements are determined by the demand deposit process, modelled as a geometric Brownian motion. The discount rates
for the cumulative funds purchased and the cumulative funds sold are assumed to be different. The optimal strategy of the bank is explicitly found and it has the following
structure: when bank reserves lower to an exogenously threshold level the bank has to purchase funds; when bank reserves tops an endogenously threshold level the bank has to sell funds.  
\end{abstract}
\section{Introduction}

The one bank problem we analyse in this paper looks at the micro-economic level with one bank and the fund markets. The fund markets usually consists of the central banks as the
main liquidity providers and other banks. The one bank we consider is a price-taker in the funds market and can obtain sufficient funds from the funds market. In our model the
bank has to borrow and lend funds to meet the reserve requirements as imposed by bank regulators. The bank's objective is to find the optimal transactions so as to minimize the cost of borrowing and lending funds needed for the reserve requirements. The bank's cost in implementing a borrowing and lending funds strategy consists of transaction costs.

Our paper is an extension of our previous work \cite{CP1} to allow for the modelling
of the reserve requirements as a geometric Brownian motion which is an improvement since
it implies the non negativity of the reserve requirement. In our previous work for tractability reasons we assume that the reserve requirements followed a Brownian motion
with drift. As in \cite{CP1} we allowed for the borrowing funds and lending funds to be discounted at possibly different rates. Our extension is in several directions. 1) we
assumed a positive exogeneously specified threshold (dictated by banking regulations) 
which is the minimum fund requirement for the bank. 2) As in \cite{CP1} we found a threshold which when attained it is optimal for the bank to start selling funds; this 
threshold is characterized through an algebraic equation whose existence is much more
difficult to establish than the corresponding one in  \cite{CP1}. 3) As in \cite{CP1}
the bank net purchase amount is described by a double Skorokhod formula, but because of the positive exogeneously specified threshold we had to recourse to the double Skorokhod formula of \cite{KavitaBurdzy}.

We formulate and solve the bank's problem by providing the banks's optimal value function and the optimal strategy.

The paper is structured as follows. In Section 2 we present the model and the main assumptions. In Section 3 we give the problem formulation and present the objective of the paper. In Section 4 we present the bank's optimal borrowing and lending funds policies and the main result of the paper.  
 
  \section{The model}\label{Descr}
In this paper we consider the problem faced by a bank which has an exogenously given demand deposit (net of withdrawals) and continuously sells and buys funds as to lower or increase the excess reserves. This is the difference between deposits and required reserves. The bank is described by:
   \begin{enumerate} 
   \item A demand deposit process $(D_{t})_{t \geq 0}$.
   
   \item  A required reserve process $(R_{t})_{t \geq 0}$, where $R_{t}=qD_{t}$.
   
   \item An excess reserve process $X_{t}=(1-q){D}_{t},$
 \end{enumerate} 

where the rate $q\in[0,1]$ is endogenously given. We work in a stochastic paradigm,  
 and $(\Omega, \textit{F},P_{x})$ is a probability space rich enough to accommodate
a standard, one-dimensional, Brownian motion $B = (B_{t}, 0 \leq t \leq \infty).$
We consider $\textbf{F}=(\textit{F}_{t})_{t\geq0}$ to be the completion of the augmented filtration generated by $X$ (so that $(\textit{F}_{t})$ satisfies the usual conditions).

The demand deposits $X = (X_{t}, 0 \leq t \leq \infty)$ are assumed to fluctuate over time as following a Geometric Brownian motion \begin{equation}\label{X}
dX_{t}= \mu X_{t} dt + \sigma X_{t} dB_{t}.
\end{equation}

so that 
$$X_{t}=X_{0}e^{{\left( \mu-  \frac{\sigma^2}{2}  \right)} t + {\sigma} B_{t}},$$ where $X_{0}>0, \sigma>0.$

Let us remark here that the process $X$ is positive and this is an improvement of our
model \cite{CP1} where for tractability reasons we assume $X$ to be a Brownian motion 
with drift.

We conclude this section noting that the bank observes nothing except the sample path of $X.$

 \subsection{Policies}

In the following we formally define bank's policies, i.e. amount of funds bought or sold in the funds market. 

\newtheorem{mydef}{Definition}[section]
 
\begin{mydef} \label{def:policy}
 A policy is defined as a pair of processes  $L$ and $U$ such that
\begin{equation}
\label{adaptLU}
 L, U \qquad \text{are}\qquad\textbf{F}-\text{adapted, right-continuous, increasing and positive.}
\end{equation}
In the context of the  funds market, $L_{t}$ and $U_{t}$ are the cumulative funds purchases and funds sales (from the central bank) that the bank undertakes up to time $t$, in order to satisfy the reserve requirements and to maximize its profit. Let us take $\lambda$ and $\\hat{lambda},$ $\lambda\geq \hat{\lambda}$ be interest rates at which the bank lends and borrows funds. A controlled process associated to the policy ($L,U$) is a process $Z = X+L-U$.
Using formula (\ref{X}) for $X$, we obtain the decomposition of $Z$ into its continuous part and its finite variation part:
\begin{equation}\label{Z}
dZ_{t}= \mu X_t dt + \sigma X_t dB_{t} + dL_{t} - dU_{t}.
\end{equation} 
In our model $Z_{t}$ is the amount of excess funds in the bank's reserve account at time $t.$ According to regulatory policies the bank should keep the amount of excess funds in the bank's reserve account above an exogenous level $a.$ The policy ($L,U$) is said to be feasible if
\begin{equation}
L_{0-}=U_{0-}=0,
\end{equation}

\begin{equation}
P_{x}\left\{ Z_{t} \geq a, \forall t \right \}=1, \forall x\geq 0,
\label{Z0}
\end{equation}

\begin{equation}
E_{x}\left[ \int_{0}^{\infty}e^{-\lambda_i t}dL \right] < \infty, \forall x\geq 0,\,\, i=1,2,
\label{Lintegrab}
\end{equation}

and
\begin{equation}
E_{x}\left[ \int_{0}^{\infty}e^{-\lambda_1 t}dU \right] < \infty, \forall x\geq 0.
\label{Uintegrab}
\end{equation}
We denote by ${\textit{S}}(x)$ the set of all feasible policies associated with the continuous process $X$ that starts at $x$.
\end{mydef}

 \subsection{Transaction Costs}

We assume that the bank can continuously sell and buy funds, thus lowering or increasing its excess reserve account. Following \cite{ChenMazumdar} we consider three types of transaction costs:
\begin{enumerate}
 \item A proportional transaction cost $\alpha$ of buying funds. 
 
 \item A proportional transaction cost $\beta$ of selling  funds.
 
 \item A continuous holding cost, incurred at the rate $h.$
\end{enumerate}

\section{Bank Optimization Problem}
 
 \subsection{The Cost Function}
 
 \begin{mydef}\label{d:k}
  The \textsl{cost function} associated to the feasible policy $(L,U)$ is 
  \begin{equation}
k_{L,U}(x)\equiv E_{x}\left[ \int_{0}^{\infty}[e^{-\lambda t}(hZ_{t}dt+\beta dU) + (n e^{-\lambda t} +(1-n) e^{-{ \hat{\lambda}} t})\alpha dL]        \right],\qquad  x\geq0,
\label{cost}
\end{equation}
with $n\in[0,1].$
\end{mydef}

 In our model, following \cite{CP1}, the cumulative funds purchases and funds sales are discounted at different rates $\hat{\lambda}\leq\lambda.$ If $n=1$ then the discounting occur at the same rate $\lambda.$

 \subsection{Banks's Objective}

    The bank's reserve management and profit-making problem is to find the optimal strategy $(\hat{L},\hat{U})$ which minimizes the cost. 

\begin{mydef}
 The control $(\hat{L},\hat{U})$ is said to be \textsl{optimal} if $k_{\hat{L},\hat{U}}(x)$ is minimal among the cost functions $k_{L,U}(x)$ associated with feasible policies $(L,U)$, for each fixed $x\geq 0$.
\end{mydef}
 
The problem cost minimization can be translated to gain maximization. The gain function is easier to work with as it turns out to have particular characteristics, when the policy is of a barrier type. We present the relation between the cost function and the gain function obtained by \cite{Harrison}.

 \subsection{The Gain Function}
 \begin{mydef}
 The gain function is defined by
  \begin {equation}
  v_{L,U}(x) \equiv E_{x}\left\{\int_{0}^{\infty} e^{-\lambda t}(rdU- cdL) \right\} - E_{x}\left\{\int_{0}^{\infty} e^{-\hat{\lambda} t}(1-n) \alpha dL  \right\},\qquad x\geq 0,
    \label{v}
  \end{equation}
  where $r \equiv h/\lambda-\beta,$ and $c \equiv h/\lambda+ n \alpha$. 
  \end{mydef}
  Then extending the arguments from \cite{Harrison} one gets the following Lemma.
  
  \begin{lem}\label{11}
   The relation between the cost function and the gain function is
      \begin{equation}\label{kv}
  k_{L,U}(x)= hx/\lambda + h\mu/\lambda^{2}- v_{L,U}(x), x\geq0.
  \end{equation}
 \end{lem}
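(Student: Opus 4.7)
The plan is to expand $k_{L,U}(x)$ by substituting $Z_t=X_t+L_t-U_t$ and regrouping so that the control-dependent contributions assemble into $-v_{L,U}(x)$ while the purely $X$-dependent part produces the deterministic constant in (\ref{kv}). Concretely, I would write
\begin{equation*}
k_{L,U}(x)=hE_x\!\!\int_0^{\infty}\!e^{-\lambda t}X_t\,dt+hE_x\!\!\int_0^{\infty}\!e^{-\lambda t}(L_t-U_t)\,dt+E_x\!\!\int_0^{\infty}\!e^{-\lambda t}\beta\,dU+E_x\!\!\int_0^{\infty}\!(ne^{-\lambda t}+(1-n)e^{-\hat\lambda t})\alpha\,dL,
\end{equation*}
and handle the three pieces separately.

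For the middle term I would apply pathwise integration by parts (equivalently, Fubini). Because $L$ and $U$ are right-continuous, nondecreasing with $L_{0-}=U_{0-}=0$, one obtains
\begin{equation*}
\int_0^{\infty}e^{-\lambda t}L_t\,dt=\frac{1}{\lambda}\int_0^{\infty}e^{-\lambda t}\,dL_t,\qquad \int_0^{\infty}e^{-\lambda t}U_t\,dt=\frac{1}{\lambda}\int_0^{\infty}e^{-\lambda t}\,dU_t,
\end{equation*}
provided the boundary terms at infinity vanish in expectation; this is precisely what the integrability conditions (\ref{Lintegrab})--(\ref{Uintegrab}) deliver via tail estimates such as $E_x[e^{-\lambda t}L_t]\le E_x[\int_t^{\infty}e^{-\lambda s}\,dL_s]\to 0$. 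Collecting the resulting $e^{-\lambda t}\,dL$ and $e^{-\lambda t}\,dU$ contributions together with the proportional-cost terms, the coefficient of $e^{-\lambda t}\,dL$ becomes $\tfrac{h}{\lambda}+n\alpha=c$, the coefficient of $e^{-\lambda t}\,dU$ becomes $\beta-\tfrac{h}{\lambda}=-r$, and the $e^{-\hat\lambda t}(1-n)\alpha\,dL$ piece is untouched. Comparing with (\ref{v}), this entire control-dependent block is exactly $-v_{L,U}(x)$.

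It then remains to evaluate $hE_x\!\int_0^{\infty}e^{-\lambda t}X_t\,dt$, which is a deterministic quantity depending only on $x$. I would compute it by Fubini, starting from the explicit representation of $X_t$ in (\ref{X}), to reproduce the state-independent constant $hx/\lambda+h\mu/\lambda^{2}$ appearing in (\ref{kv}); this is where one extends Harrison's argument to accommodate the drifted dynamics. Summing the two contributions yields the claimed identity. The main obstacle is the rigorous execution of the middle step: verifying that the boundary terms at $t=\infty$ genuinely vanish in expectation and that the Fubini interchanges used to regroup the control terms are legitimate. Both follow from (\ref{Lintegrab})--(\ref{Uintegrab}) together with the monotonicity of $L$ and $U$, but require care rather than formal manipulation.
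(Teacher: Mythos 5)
Your regrouping of the cost functional is the right idea and is exactly the Harrison-style argument the paper alludes to: split $hZ_t=h(X_t+L_t-U_t)$, use Fubini (or pathwise integration by parts, made legitimate by the monotonicity of $L,U$ and the integrability conditions \eqref{Lintegrab}--\eqref{Uintegrab}) to convert $\int_0^\infty e^{-\lambda t}L_t\,dt$ into $\tfrac{1}{\lambda}\int_0^\infty e^{-\lambda t}\,dL_t$, and verify that the coefficients assemble to $c\,e^{-\lambda t}\,dL-r\,e^{-\lambda t}\,dU+(1-n)\alpha e^{-\hat\lambda t}\,dL=-[\text{integrand of }v_{L,U}]$. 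That part is correct and complete.

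The gap is in the last step, which you assert rather than carry out. You claim that evaluating $hE_x\int_0^\infty e^{-\lambda t}X_t\,dt$ "reproduces" $hx/\lambda+h\mu/\lambda^2$. It does not. In this paper $X$ is the geometric Brownian motion \eqref{X}, so $E_x[X_t]=xe^{\mu t}$ and
\begin{equation*}
hE_x\!\int_0^{\infty} e^{-\lambda t}X_t\,dt=h x\!\int_0^{\infty}e^{-(\lambda-\mu)t}\,dt=\frac{hx}{\lambda-\mu}\qquad(\lambda>\mu),
\end{equation*}
whereas $hx/\lambda+h\mu/\lambda^2$ is what one gets for an arithmetic Brownian motion $X_t=x+\mu t+\sigma B_t$, for which $E_x[X_t]=x+\mu t$. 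So your argument, executed honestly, yields $k_{L,U}(x)=\tfrac{hx}{\lambda-\mu}-v_{L,U}(x)$, which disagrees with \eqref{kv}. The displayed constant in Lemma~\ref{11} appears to be carried over unchanged from the authors' earlier Brownian-motion-with-drift model \cite{CP1}; with the GBM dynamics of this paper it is incorrect. You should compute the $X$-term explicitly rather than take the target constant on faith --- doing so would have exposed the mismatch, and would also have surfaced the standing assumption $\lambda>\mu$ needed for the discounted integral to be finite at all.
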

 
\section{The Optimal Policy}
 
 It turns out that banks's optimal policies are of barrier type and we formally introduce them below.
 \subsection{The Barrier Policies}
 
Let $b>a>0$ be a real fixed number. We consider that $X_{0}=x\in [a,b]$. If $X_{0}>b$, then we allow a jump at $0$ for $U$: $U_{0}=X_{0}-b$.

   \begin{mydef}\label{Sc}
     The barrier policies are the set of policies $(L,U)\in \textit{S}(x)$ that satisfy:
     \begin{enumerate}
      \item  $(L,U)$ continuous on $(0,\infty)$, increasing, $L_{0-}=U_{0-}=0$,
      
       \item $Z_{t}\equiv X_{t}+L_{t}-U_{t}\geq 0, \forall t\geq 0$,
        
      \item $ \int_{0}^{t}I_{Z_{t}>a}dL_{t}=0, \int_{0}^{t}I_{Z_{t}<b}dU_{t}=0.$ 
      \end{enumerate} 
   \end{mydef}
   
   Let us recall that the lower threshold $a$ is endogenously given (impose by bank regulation). The upper threshold $b$ is chosen by the bank, it is the amount beyond which the bank will start selling funds.  
   
    The Double Skorokhod Formula obtained in \cite{KavitaBurdzy} can be translated into a formula for the bank's  transaction amount $L-U$:

  \begin{equation} \label{XLU}
L_{t} - U_{t} = -[(X_{0}-b)^{+}\wedge \inf_{u\in [0,t]}(X_{u}-a)] \vee \sup_{s\in [0,t]}[(X_{s}-b)\wedge \inf_{u\in[s,t]}
(X_{u}-a)].
\end{equation}
 
  \subsection{The Main Result}
  
  The bank has to optimally chose threshold $b$ as to maximize its gain function. The 
  optimal selection of $b$ is explained below.
  
 We need to introduce some quantities at this point. Let $-\gamma_{1}, \gamma_{2}$ be the roots of $ \sigma^{2}\gamma^{2}/2 + (\mu-\sigma^{2}/2 )\gamma - \lambda =0,$ so 

\begin{equation}
\label{beta_}
\gamma_{1}\equiv \frac{\sqrt{(\mu-\sigma^{2}/2)^{2}+2\sigma^{2}\lambda}+(\mu-\sigma^{2}/2)}{\sigma^{2}}>0,
\end{equation}
\begin{equation}
\label{beta^}
\gamma_{2}\equiv \frac{\sqrt{(\mu-\sigma^{2}/2)^{2}+2\sigma^{2}\lambda)}-(\mu-\sigma^{2}/2)}{\sigma^{2}}>0, 
\end{equation}

The constant $\bar{\gamma}_{2}$ is defined by

\begin{equation}
\label{beta^1}
\bar{\gamma}_{2}\equiv \frac{\sqrt{(\mu-\sigma^{2}/2)^{2}+2\sigma^{2}\bar{\lambda})}-(\mu-\sigma^{2}/2)}{\sigma^{2}}>0,
\end{equation}

Let the function $g$ be defined as
\begin{equation}\label{gG}
g(x)\equiv \gamma_{1}x^{\gamma_{2}} + \gamma_{2}x^{-\gamma_{1}}. 
\end{equation}  
 
 and
 
  \begin{equation}
  \label{vGb}
  v_{b}(x) =
\left\{
	\begin{array}{ll}
		 \frac{r}{g'(b/a)}g(x/a)+ \frac{c }{g'(a/b)}g(x/b) & \mbox{if } a\leq x\leq b \\
		 v_{b}(b)+ (x-b)r & \mbox{if }  x>b.
	\end{array}
\right.
\end{equation}

According to \cite{CanepaC} there exists a unique $b>0$ such that 
\begin{equation}\label{b}
\frac{g(1)}{g(a/b)}=\frac{r}{c},
\end{equation} 
which we denote by $b^{*}$.

Let us define 
 \begin{equation}
  \label{v1}
  v_{1} (x)=v_{b^{*}}(x),
\end{equation}

and
 
 \begin{equation}\label{v2}
 v_{2}(x) \equiv  - \frac{(1-n) \alpha   }{  \gamma_{2} a^{ \gamma_{2}-1  } } x^{\gamma_{2}}. 
\end{equation}

 \begin{prop}
  \label{BarrierValue}
  The barrier policy $(\hat{L}, \hat{U})$ associated with $b$ of \eqref{b} is admissible, i.e., $(\hat{L}, \hat{U})\in \textit{S}(x).$ Moreover
   \begin {equation}
  v_{1}(x) = E_{x}\left\{\int_{0}^{\infty} e^{-\lambda_1 t}(rd\hat{U}- cd \hat{L}) \right\},\qquad x\geq 0,
    \label{vv1}
  \end{equation}
   and
   \begin {equation}
  v_{2}(x) = - E_{x}\left\{\int_{0}^{\infty} e^{-\lambda_2 t}(1-n) \alpha d \hat{L}  \right\},\qquad x\geq 0.
    \label{vv2}
    \end{equation}
   Therefore
   \begin {equation}
  v_{\hat{L},\hat{U}}(x)= v_{1}(x) + v_{2}(x).
    \label{v3}
  \end{equation}
  
  \end{prop}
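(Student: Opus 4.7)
The strategy is a verification argument: establish admissibility, then identify $v_1$ and $v_2$ separately via It\^o's formula applied to the candidate functions. For admissibility, the pair $(\hat{L},\hat{U})$ is determined from the double Skorokhod formula (\ref{XLU}) as a path functional of $X$, so it is $\mathbf{F}$-adapted, right-continuous, non-decreasing and non-negative by the properties of the Skorokhod map of \cite{KavitaBurdzy}, while (\ref{Z0}) is automatic because the reflection keeps $Z_t\in[a,b^*]$. The integrability requirements (\ref{Lintegrab})--(\ref{Uintegrab}) (together with the analogous bound at discount rate $\hat{\lambda}$ that is needed later) are obtained by applying It\^o to a bounded $C^2$ test function of $Z$ and using boundedness of $Z$ on $[a,b^*]$ to control $E_x[\int_0^\infty e^{-\lambda t} d\hat{L}_t]$ and $E_x[\int_0^\infty e^{-\lambda t} d\hat{U}_t]$ by finite boundary terms.

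For the $v_1$ identity, note that $x^{\gamma_2}$ and $x^{-\gamma_1}$ are the fundamental solutions of the Euler-type ODE $\frac{\sigma^2 x^2}{2} w'' + \mu x w' - \lambda w = 0$, and scale-invariance of this ODE implies that both $g(x/a)$ and $g(x/b^*)$ solve it; hence $v_1$ satisfies the ODE on $(a,b^*)$. The coefficients in (\ref{vGb}) are chosen so that, using $g'(1)=0$ (built into the definition of $g$) together with the characterisation (\ref{b}) of $b^*$ from \cite{CanepaC}, the smooth-pasting produces the marginal cost $c$ at $a$ and the marginal gain $r$ at $b^*$. I would then apply It\^o to $e^{-\lambda t} v_1(Z_t)$ up to a localising stopping time $T\wedge\tau_n$: the drift vanishes by the ODE, the stochastic integral is a martingale after localisation, and the reflection push terms collapse to $\int_0^{T\wedge\tau_n} e^{-\lambda t}(c\,d\hat{L}_t - r\,d\hat{U}_t)$. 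Letting $n\to\infty$ and $T\to\infty$ via dominated convergence (using boundedness of $v_1$ on $[a,b^*]$ and the integrability bounds above) yields (\ref{vv1}) after rearrangement.

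For the $v_2$ identity, an analogous computation handles the second integral, now with a single-power ansatz $v_2\propto x^{\bar{\gamma}_2}$, where $\bar{\gamma}_2$ is the positive root of the characteristic equation for the $\hat{\lambda}$-discounted ODE. The drift of $e^{-\hat{\lambda} t} v_2(Z_t)$ vanishes in the interior, and the normalisation in (\ref{v2}) fixes the lower-boundary push to contribute exactly $-(1-n)\alpha\int e^{-\hat{\lambda} t} d\hat{L}_t$; the upper-boundary contribution is absorbed using the martingale identity for $Z_t^{\bar{\gamma}_2} e^{-\hat{\lambda} t}$ combined with the integrability bounds from admissibility. The final identity (\ref{v3}) is then immediate from the linearity of the gain functional (\ref{v}).

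The main technical obstacle will be the smooth-pasting verification at the endogenous upper barrier $b^*$: one has to show that the algebraic equation (\ref{b}) is precisely what turns the $d\hat{U}$ barrier term produced by It\^o into the target $r\,d\hat{U}_t$, a fact that rests on the built-in identity $g'(1)=0$ and on the algebraic relations between $\gamma_1$ and $\gamma_2$. A second delicate point is the tail limit in the $v_2$ step, where the weaker discount rate $\hat{\lambda}\le\lambda$ gives only weak a~priori integrability against $e^{-\hat{\lambda} t}$ and forces one to invoke the bounded Skorokhod representation of $Z\in[a,b^*]$ explicitly to justify passing $T\to\infty$.
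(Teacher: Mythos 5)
Your approach for \eqref{vv1} matches the paper: apply It\^o to $e^{-\lambda t}v_1(Z_t)$ with $Z = X + \hat{L} - \hat{U}$, use $\Gamma v_1 = \lambda v_1$ on $[a,b^*]$ together with $v_1'(a)=c$, $v_1'(b^*)=r$, take expectations and let $t\to\infty$ using boundedness of $v_1$ on $[a,b^*]$.

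For \eqref{vv2}, however, there is a genuine gap. The paper does \emph{not} apply It\^o to $v_2(Z_t)$: it introduces the auxiliary process $\bar{Z}_t = X_t + \hat{L}_t$, which carries only the lower control and no $\hat{U}$, and applies It\^o to $e^{-\hat{\lambda}t}v_2(\bar{Z}_t)$. Precisely because $\bar{Z}$ has no upper reflection, no $d\hat{U}$ boundary term ever appears, and the computation closes with only the $v_2'(a)\,d\hat{L}$ contribution. You instead apply It\^o to $v_2(Z_t)$ with the doubly reflected $Z$. That produces an additional term $-v_2'(b^*)\,d\hat{U}$ in the semimartingale decomposition, and since $v_2$ is a single-power function its derivative does not vanish at $b^*$: $v_2'(b^*) = -(1-n)\alpha\,(b^*/a)^{\bar\gamma_2-1}\neq 0$ (here using, as you correctly note, that the exponent in \eqref{v2} should read $\bar\gamma_2$, not $\gamma_2$, so that $\Gamma v_2 = \hat\lambda v_2$). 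This leftover term does not cancel and cannot be ``absorbed using the martingale identity for $Z_t^{\bar\gamma_2}e^{-\hat\lambda t}$'': that process is a local martingale only for the \emph{uncontrolled} diffusion $X$, not for the reflected process $Z$, precisely because the reflection pushes $d\hat{L}$ and $d\hat{U}$ contribute additional finite-variation drift. As a result your derivation of \eqref{vv2} produces an extra, nonzero $E_x[\int_0^\infty e^{-\hat\lambda s}v_2'(b^*)\,d\hat{U}_s]$ term and does not yield the claimed identity. The fix is exactly the paper's change of process to $\bar{Z}=X+\hat{L}$ (though one should then also address carefully the tail limit $E_x[e^{-\hat\lambda t}v_2(\bar{Z}_t)]\to 0$, since $\bar{Z}$ is unbounded above, as well as the identification $v_2'(\bar{Z}_s)\,d\hat{L}_s = v_2'(a)\,d\hat{L}_s$, which requires that $\hat{L}$ only grow when $\bar{Z}=a$).
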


  \begin{proof}
  
If $u:R\rightarrow R$ is a function of class $C^{2}$ (i.e. twice continuously differentiable), then denote by $\Gamma$ the generator of the continuous diffusion process $X$ in (\ref{X}):
\begin{equation}\label{Gamma}
\Gamma u(x) = \mu x u'(x)+\frac{\sigma^{2}}{2} x^2 u''(x).
\end{equation}
 Since $Z=X+\LL-\UU$ then
\begin{equation}\label{R2}
 d v_{1}(Z_{t})=\sigma v_{1}'(Z)dB_{t} + [\Gamma v_{1}(Z)dt + v_{1}'(a)d\LL - v_{1}'(b)d\UU].
 \end{equation}
 Indeed by Ito's Lemma combined with the fact that $\LL$ increases only when $Z=a$, whereas $\UU$ increases only when $Z=b$ yields:
 \begin{align*}
 dv_{1}(Z_{t})&= v_{1}'(Z_{t})dZ_{t}+ 1/2v_{1}''(Z_{t})(dZ_{t})^{2}\\
  &= v_{1}'(Z)(dX + d\LL- d\UU)+ \frac{1}{2}\sigma^{2}v_{1}''(Z)dt\\
  &=v_{1}'(Z)(\mu dt+\sigma dB_{t} + d\LL_{t}-d\UU_{t})+\frac{1}{2}\sigma^{2}v_{1}''(Z)dt\\
  &= \sigma v_{1}'(Z)dB_{t}+ \Gamma v_{1}(Z)dt + v_{1}'(a)d\LL-v_{1}'(b)d\UU.
 \end{align*}
 Since $Z$ is bounded a.s. then $e^{-\lambda s}v_{1}'(Z)$ is bounded a.s., thus the process
 $M_{t}\equiv \int_{0}^{t}e^{-\lambda s}v_{1}'(Z_{t})\sigma dB_{t}, t\geq 0$ is a martingale. Consequently
 $$E_{x}M_{t}=0.$$
 From the definition of $ v_{1}$ we infer that (see for details \cite{CanepaC})
 $$\Gamma v_1(z) = \lambda v_1(z),\, z\in[a,b],\quad  v_{1}'(a)=c,\quad   v_{1}'(b)=r.   $$
  Applying integration by parts leads to:
 \begin{align*}
 e^{-\lambda t}v_{1}(Z_{t}) &= v_{1}(Z_{0}) + \int_{0}^{t}e^{-\lambda s}dv_{1}(Z) - \lambda \int_{0}^{t}e^{- \lambda s}v_{1}(Z)ds\\
 &=v_{1}(Z_{0}) + M_{t} +\int_{0}^{t}e^{- \lambda s}[\Gamma v_{1}(Z)ds+ v_{1}'(a)d\LL-v_{1}'(b)d\UU]- \lambda\int_{0}^{t}e^{- \lambda s} v_{1}(Z)ds \\
 &=v_{1}(Z_{0}) + M_{t} +\int_{0}^{t}e^{- \lambda s}[\Gamma v_{1}(Z)-\lambda v_{1}(Z)]ds - \int_{0}^{t}e^{- \lambda s}[v_{1}'(b)d\UU- v_{1}'(a)d\LL]\\
  &=v_{1}(Z_{0})+ M_{t}- \int_{0}^{t}e^{- \lambda s}[r d\UU-cd\LL].
  \end{align*}
  
  By taking expectation, then letting $t\rightarrow\infty,$ and using that $v_{1}$ is bounded leads to
  
  \begin{equation}\label{e0}
  v_{1}(x) = E_{x}\left\{\int_{0}^{\infty} e^{-\lambda t}(rd\hat{U}- cd \hat{L}) \right\}.
  \end{equation}
  
  Let $\ZZ=X+\LL$ then
\begin{equation}\label{RR2}
 d v_{2}(\ZZ_{t})=\sigma v_{2}'(\ZZ)dB_{t} + [\Gamma v_{2}(\ZZ)dt + v_{2}'(a)d\LL].
 \end{equation}
 Indeed by Ito's Lemma combined with the fact that $\LL$ increases only when $Z=a$ yields:
 \begin{align*}
 dv_{2}(\ZZ_{t})&= v_{2}'(\ZZ_{t})d\ZZ_{t}+ 1/2v_{2}''(\ZZ_{t})(d\ZZ_{t})^{2}\\
  &= v_{2}'(\ZZ)(dX + d\LL)+ \frac{1}{2}\sigma^{2}v_{2}''(\ZZ)dt\\
  &=v_{2}'(\ZZ)(\mu dt+\sigma dB_{t} + d\LL_{t})+\frac{1}{2}\sigma^{2}v_{2}''(\ZZ)dt\\
  &= \sigma v_{2}'(\ZZ)dB_{t}+ \Gamma v_{2}(\ZZ)dt + v_{2}'(a)d\LL.
 \end{align*}
 Since $\ZZ$ is positive a.s. then $e^{-\bar{\lambda} s}v_{2}'(\ZZ)$ is bounded a.s., thus the process
 $N_{t}\equiv \int_{0}^{t}e^{- \bar{\lambda} s}v_{2}'(\ZZ_{t})\sigma dB_{t}, t\geq 0$ is a martingale. Consequently
 $$E_{x}N_{t}=0.$$
 From the definition of $ v_{2}$ we infer that
 $$\Gamma v_2 = \bar{\lambda} v_2,\quad  v_{2}'(a)= -(1-n)\alpha.$$
  Applying integration by parts leads to:
 \begin{align*}
 e^{-\lambda_2 t} v_{2}(\ZZ_{t}) &= v_{2}(\ZZ_{0}) + \int_{0}^{t}e^{-\bar{\lambda} s}dv_{2}(\ZZ) - \bar{\lambda} \int_{0}^{t}e^{- \bar{\lambda} s}v_{2}(\ZZ)ds\\
 &=v_{2}(\ZZ_{0}) + N_{t} +\int_{0}^{t}e^{- \bar{\lambda} s}[\Gamma v_{2}(\ZZ)ds+ v_{2}'(a)d\LL]- \bar{\lambda}\int_{0}^{t}e^{- \bar{\lambda} s} v_{2}(\ZZ)ds \\
 &=v_{2}(\ZZ_{0}) + N_{t} +\int_{0}^{t}e^{- \bar{\lambda} s}[\Gamma v_{2}(\ZZ)-\bar{\lambda} v_{2}(\ZZ)]ds - \int_{0}^{t}e^{- \bar{\lambda} s}[- v_{2}'(a)d\LL]\\
  &=v_{2}(Z_{0})+ N_{t}- \int_{0}^{t}e^{- \lambda_2 s}[(n-1)\alpha d\LL].
  \end{align*}
  
  By taking expectation, then letting $t\rightarrow\infty,$ and using that $v_{2}$ is bounded on $[a,b]$ leads to
  
  \begin{equation}\label{e1}
   v_{2}(x) = - E_{x}\left\{\int_{0}^{\infty} e^{-\bar{\lambda} t}(1-n) \alpha d \hat{L}  \right\}.
\end{equation}

The feasibility of the policy $(\LL, \UU),$ can be established as in \cite{CP1}.

\end{proof}

 The following is the main result of our paper.
 \begin{thm}\label{main}
  The barrier policy $(\hat{L}, \hat{U})$ associated with $b$ of \eqref{b} is optimal, i.e., for every $({L}, {U})\in \textit{S}(x),$
   \begin {equation}
  v_{L,U}(x)\leq v_{  \hat{L}, \hat{U}}(x).
    \label{v4}
   \end{equation}
 \end{thm}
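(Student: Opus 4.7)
The plan is to carry out a classical verification argument. Since Proposition~\ref{BarrierValue} already shows that the candidate $V(x):=v_1(x)+v_2(x)$ equals $v_{\hat L,\hat U}(x)$, it suffices to prove $v_{L,U}(x)\leq V(x)$ for every $(L,U)\in\textit{S}(x)$.

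The first step is to establish HJB-type variational inequalities on $[a,\infty)$. For $v_1$, extended by its linear piece $v_1(b^*)+(x-b^*)r$ for $x>b^*$, I would show that
$$\Gamma v_1(x)-\lambda v_1(x)\leq 0,\qquad r\leq v_1'(x)\leq c,\qquad x\in[a,\infty),$$
with equality in the ODE on $[a,b^*]$, $v_1'(a)=c$, and $v_1'(x)=r$ for $x\geq b^*$. These bounds follow from direct analysis of $g$ in \eqref{gG} together with the defining equation \eqref{b} for $b^*$; the delicate part is the inequality $\Gamma v_1\leq\lambda v_1$ on $(b^*,\infty)$, where $v_1$ is affine, which reduces to a linear inequality in $x$ enforced by the smooth-pasting condition $v_1'(b^*)=r$. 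For $v_2$ the identity $\Gamma v_2=\bar\lambda v_2$ holds globally by construction, and the gradient bound $v_2'(x)\leq-(1-n)\alpha$ on $[a,\infty)$ is immediate from its explicit power-law form.

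The second step is an It\^o argument. For an arbitrary feasible $(L,U)$, set $Z_t=X_t+L_t-U_t\geq a$ and apply It\^o's formula to $e^{-\lambda t}v_1(Z_t)$ and to $e^{-\bar\lambda t}v_2(Z_t)$ along a localizing sequence of stopping times $T_n\uparrow\infty$. The local-martingale terms vanish in expectation; the drift and reflection contributions are bounded using the variational inequalities, yielding
\begin{align*}
v_1(x) &\geq E_x\bigl[e^{-\lambda T_n}v_1(Z_{T_n})\bigr]+E_x\Bigl[\int_0^{T_n}e^{-\lambda s}(r\,dU-c\,dL)\Bigr],\\
v_2(x) &\geq E_x\bigl[e^{-\bar\lambda T_n}v_2(Z_{T_n})\bigr]-E_x\Bigl[\int_0^{T_n}e^{-\bar\lambda s}(1-n)\alpha\,dL\Bigr].
\end{align*}
Passing $n\to\infty$ via the integrability conditions \eqref{Lintegrab}--\eqref{Uintegrab} and the growth of $v_1,v_2$ to kill the boundary terms, then summing, recovers $V(x)\geq v_{L,U}(x)$, which combined with Proposition~\ref{BarrierValue} proves \eqref{v4}.

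The main obstacle is Step~1: verifying the two-sided gradient bound $r\leq v_1'\leq c$ on $[a,b^*]$ requires extracting convexity/monotonicity information from $g$ and using \eqref{b} in an essential way, and the inequality $\Gamma v_1\leq\lambda v_1$ on the affine region $(b^*,\infty)$ must be checked by hand, reducing to a linear comparison whose validity ultimately rests on the smooth-fit condition at $b^*$. A secondary technical point is justifying the limit $T_n\to\infty$ in the It\^o identities, which requires controlling the boundary terms $E_x[e^{-\lambda T_n}v_1(Z_{T_n})]$ and $E_x[e^{-\bar\lambda T_n}v_2(Z_{T_n})]$ through standard exponential moment estimates for the geometric Brownian motion $X$; one should also briefly address the initial jump $U_0=(x-b^*)^+$ when $x>b^*$, which contributes a term that is absorbed by the linear piece of $v_1$.
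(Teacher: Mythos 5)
The paper itself does not give a proof of Theorem~\ref{main}; it simply refers to \cite{CP1}. Your plan --- establish HJB-type variational inequalities for $v_1$ and $v_2$, then run a verification argument via It\^o on an arbitrary feasible policy --- is the right framework and is clearly what \cite{CP1} and the proof of Proposition~\ref{BarrierValue} have in mind. The $v_1$ half of your argument is sound in outline: $\Gamma v_1\le\lambda v_1$, $r\le v_1'\le c$ on $[a,\infty)$, and It\^o applied to $e^{-\lambda t}v_1(Z_t)$ give the one-sided inequality you wrote.

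There is, however, a genuine gap in the $v_2$ half. You apply It\^o to $e^{-\bar\lambda t}v_2(Z_t)$ with $Z_t=X_t+L_t-U_t$. This produces an extra reflection term $\int_0^{T_n}e^{-\bar\lambda s}v_2'(Z_s)\,dU_s$ alongside the $dL$ term, and since $v_2'<0$ this $dU$ term is \emph{nonpositive}. In the rearranged identity
\begin{equation*}
v_2(x)=E_x\bigl[e^{-\bar\lambda T_n}v_2(Z_{T_n})\bigr]-E_x\Bigl[\int_0^{T_n}e^{-\bar\lambda s}v_2'(Z_s)\,dL_s\Bigr]+E_x\Bigl[\int_0^{T_n}e^{-\bar\lambda s}v_2'(Z_s)\,dU_s\Bigr],
\end{equation*}
the last term has the wrong sign: it can only \emph{decrease} the right-hand side, so the inequality you claim,
$v_2(x)\ge E_x[e^{-\bar\lambda T_n}v_2(Z_{T_n})]-E_x[\int_0^{T_n}e^{-\bar\lambda s}(1-n)\alpha\,dL]$,
does not follow --- the $dU$ contribution cannot be discarded in the needed direction. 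The fix, which is exactly what the paper's Proposition~\ref{BarrierValue} does for the barrier policy, is to apply It\^o to $v_2(\bar Z_t)$ with $\bar Z_t:=X_t+L_t$. Note that $\bar Z_t=Z_t+U_t\ge Z_t\ge a$, so $\bar Z$ still lives in the region where the gradient bound is invoked, but there is no $dU$ term at all, and the argument closes cleanly. One should also record that the gradient bound $v_2'(x)\le-(1-n)\alpha$ on $[a,\infty)$ (and the corresponding claim for the barrier case in Proposition~\ref{BarrierValue}) holds precisely when $\bar\gamma_2\ge 1$, equivalently $\bar\lambda\ge\mu$; this standing assumption should be made explicit before the verification step.
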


\begin{proof}
The proof follows the same arguments as in \cite{CP1} hence is omitted.
\end{proof}

\end{document}